\theoremstyle{plain}
\newtheorem{theorem}{Theorem}[section]
\newtheorem{lemma}{Lemma}[section]
\newtheorem{prop}{Proposition}[section]
\newtheorem{cor}{Corollary}[section]
\theoremstyle{definition}
\newtheorem{defin}{Definition}[section]
\newtheorem*{acknowledgement}{Acknowledgements}
\newcommand{\mf}[1]{\displaystyle{\mathfrak{#1}}}
\newcommand{\comment}[1]{}
\DeclareMathOperator{\spec}{\ensuremath{Spec}}
\DeclareMathOperator{\Gr}{\ensuremath{gr}}
\DeclareMathOperator{\Id}{\ensuremath{Id}}
\DeclareMathOperator{\Sym}{\ensuremath{Sym}}
\DeclareMathOperator{\Aut}{\ensuremath{Aut}}
\begin{document}
\title[Derived invariants of the fixed rings]{Derived invariants of the fixed ring of enveloping algebras of semisimple Lie algebras}
\author{Akaki Tikaradze}
\email{ tikar06@gmail.com}
\address{University of Toledo, Department of Mathematics \& Statistics, 
Toledo, OH 43606, USA}
\begin{abstract}
Let $\mathfrak{g}$ be a semisimple complex Lie algebra, and let $W$
be a finite subgroup of $\mathbb{C}$-algebra automorphisms of the enveloping algebra $U(\mathfrak{g})$.
We show that the derived category of $U(\mathfrak{g})^W$-modules determines isomorphism classes
of both $\mathfrak{g}$ and $W.$ Our proofs are based on the geometry of the Zassenhaus variety of the reduction modulo $p\gg 0$
of $\mathfrak{g}.$ Specifically, we use non-existence of certain \'etale coverings of its smooth locus.

\end{abstract}

\maketitle

\section{Introduction}

Questions regarding finite subgroups of automorphisms of enveloping algebras have been of interest
in ring theory and representation theory for some time now. One such natural question
is as follows. Given a finite subgroup $\Gamma\subset \Aut(U(\mathfrak{g}))$ of automorphisms
of the enveloping algebra of a complex semisimple Lie algebra $\mathfrak{g},$ to what extent can
$\mathfrak{g}$ and $\Gamma$ be recovered from the fixed ring $U(\mathfrak{g})^{\Gamma}?$
One of the earliest results in this direction was obtained by Alev and Polo \cite{AP}. They showed that given a finite subgroup $W$ of  automorphisms of
the enveloping algebra of a semisimple Lie algebra $\mathfrak{g},$ such that the fixed ring $U(\mathfrak{g})^W$ is isomorphic
to an enveloping algebra of a Lie algebra $\mathfrak{g'},$ then $W$ must be trivial and
$\mathfrak{g'}=\mathfrak{g}.$
On the other hand, Caldero \cite{C} showed that given semisimple Lie algebras
$\mathfrak{g}, \mathfrak{g}'$ and finite subgroups of automorphisms
of corresponding enveloping algebras $W\subset \Aut(U(\mathfrak{g})),$
$W'\subset \Aut(U(\mathfrak{g}'))$ such that the corresponding fixed rings $U(\mathfrak{g})^W$ and $U(\mathfrak{g'})^{W'}$ are isomorphic,
then $\mathfrak{g}\cong\mathfrak{g'}.$ If, in addition, $W, W'$ consist of adjoint automorphisms,
 then Caldero also shows that $\mathbb{C}[W]\cong \mathbb{C}[W'].$ 
Moreover, if $W$ is a subgroup of
 $PSL_2$, then  $W\cong W'.$

The following is our main result.

\begin{theorem}\label{main}

Let $\mathfrak{g}, \mathfrak{g'}$ be semisimple complex Lie algebras. Let $W\subset \Aut(U(\mathfrak{g}))$ and $W'\subset \Aut(U(\mathfrak{g'}))$
be finite subgroups of $\mathbb{C}$-algebra automorphisms. 
If the  fixed-point algebras $U(\mathfrak{g})^W$ and $U(\mathfrak{g'})^{W'}$ are derived equivalent,
then $\mathfrak{g}\cong\mathfrak{g'}$ and $W\cong W'.$

\end{theorem}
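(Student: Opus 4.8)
The plan is to transport the entire problem to characteristic $p\gg 0$, where the fixed ring becomes a module-finite algebra over its center, the center is the coordinate ring of a quotient of the Zassenhaus variety, and the isomorphism classes of both $\mathfrak g$ and $W$ are then extracted from rigid \'etale-homotopy data. First I would spread out and reduce the derived equivalence modulo $p$. Both $U(\mathfrak g)^W$ and $U(\mathfrak g')^{W'}$, together with the two-sided tilting complex realizing the equivalence, are defined over a finitely generated subring $R\subset\mathbb C$; after enlarging $R$ and choosing a maximal ideal over a large prime, reduction gives algebras over $k=\overline{\mathbb F_p}$ that are again derived equivalent. Here I would use that a finite group of $\mathbb C$-algebra automorphisms of $U(\mathfrak g)$ is, up to conjugacy, linearizable, i.e. induced from $\Aut(\mathfrak g)$, so that for $p\gg 0$ the reduction of $U(\mathfrak g)^W$ is $U(\mathfrak g_k)^W$ with $p\nmid |W|$ and invariants commuting with reduction. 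Write $U=U(\mathfrak g_k)$, let $X=\spec Z(U)$ be its Zassenhaus variety, and recall that $U$ is a maximal order, Azumaya of PI degree $p^{N}$ (with $N$ the number of positive roots) over the smooth locus $X^{\mathrm{sm}}$, whose complement has codimension $\ge 2$.

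Next I would identify the center of the fixed ring. Using that $U$ is Azumaya over $X^{\mathrm{sm}}$ and that $W$ acts on $X$, one shows $Z(U^{W})=Z(U)^{W}=k[X]^{W}=k[X/W]$, so $\spec Z(U(\mathfrak g_k)^{W})\cong X/W$. Since the center is $\mathrm{HH}^0$ and hence a derived invariant, the reduced derived equivalence yields an isomorphism of centers and thus of varieties $Q:=X/W\cong X'/W'$. The decisive role of characteristic $p$ is that the $p$-center $Z_p(U)\cong k[\mathfrak g^{*(1)}]$ forces $W$ to act \emph{faithfully} on $X$: an element acting trivially on $Z(U)$ acts trivially on $\mathfrak g^{*(1)}$, hence is the identity in $\Aut(\mathfrak g)$ (in characteristic zero this fails, as all inner automorphisms act trivially on the Harish--Chandra center). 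Because no nontrivial element of $\Aut(\mathfrak g)\subset GL(\mathfrak g)$ acts as a pseudo-reflection, $W$ fixes no divisor of $X^{\mathrm{sm}}$, so by purity of the branch locus the restricted quotient map $\pi\colon X^{\circ}\to Q^{\mathrm{sm}}$, where $X^{\circ}=\pi^{-1}(Q^{\mathrm{sm}})\cap X^{\mathrm{sm}}$, is a finite connected \'etale Galois cover with group $W$.

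The crux is the rigidity statement announced in the abstract: $X^{\mathrm{sm}}$ admits no nontrivial connected \'etale covering, i.e. $\pi_1^{\text{\'et}}(X^{\mathrm{sm}})=1$. Granting this, and that deleting the codimension-$\ge 2$ set $X^{\mathrm{sm}}\setminus X^{\circ}$ leaves $\pi_1$ unchanged, the cover $X^{\circ}\to Q^{\mathrm{sm}}$ is the universal cover of $Q^{\mathrm{sm}}$; hence $W\cong\pi_1^{\text{\'et}}(Q^{\mathrm{sm}})$ and $X^{\circ}$ is recovered intrinsically from $Q$. Running the identical argument on the primed side gives $W'\cong\pi_1^{\text{\'et}}(Q^{\mathrm{sm}})$ and identifies the two universal covers, whence $W\cong W'$ and $X\cong X'$ (recovering $X$ from $X^{\circ}$ by normalizing $Q$ in the function field of the universal cover). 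Finally I would invoke that the Zassenhaus variety determines its Lie algebra, so that $X\cong X'$ yields $\mathfrak g\cong\mathfrak g'$, completing the proof.

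I expect the main obstacle to be the rigidity input $\pi_1^{\text{\'et}}(X^{\mathrm{sm}})=1$. To prove it I would exploit the factorization $X=\mathfrak g^{*(1)}\times_{\mathfrak c^{(1)}}\mathfrak c$ of $X$ through the Frobenius-twisted adjoint quotient and the finite map $X\to\mathfrak g^{*(1)}\cong\mathbb A^{\dim\mathfrak g}$, and control how a hypothetical connected \'etale cover of $X^{\mathrm{sm}}$ would be forced to extend over, or descend from, affine space; simple-connectedness of $\mathbb A^{n}$ together with the codimension estimates on the deleted singular and ramification loci should force triviality. The remaining points, namely the faithful pseudo-reflection-free action of $W$ on $X$ and the flat descent of the tilting complex to characteristic $p$, I regard as routine by comparison with this fundamental-group computation.
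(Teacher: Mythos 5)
Your overall strategy coincides with the paper's: spread out over a finitely generated subring of $\mathbb{C}$, reduce modulo $p\gg 0$, use derived invariance of Hochschild cohomology to identify the centers, realize $\spec Z(U(\mathfrak{g}_{\bold k}))^W$ as a quotient of the Zassenhaus variety, and recover $W$ from Galois-cover rigidity via purity of the branch locus. However, there are three genuine gaps. First, you assume that a finite subgroup $W\subset\Aut(U(\mathfrak{g}))$ is linearizable, i.e.\ conjugate into $\Aut(\mathfrak{g})$. This is not known (it is itself a rigidity statement of roughly the strength of the theorem), and you lean on it three times: to commute taking invariants with reduction mod $p$, to prove that $W$ acts faithfully on the center, and to rule out pseudo-reflections. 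The paper avoids linearization entirely: $Z(U^W)=Z(U)^W$ and faithfulness follow from Montgomery's theorem on $X$-outer automorphisms combined with a Skolem--Noether plus associated-graded argument (Corollary \ref{action}), and the absence of a fixed divisor is proved by restricting to a fiber $\spec Z_\chi$ over the Harish--Chandra center, which is symplectic off a codimension-$2$ set, so a nontrivial finite-order Poisson automorphism cannot fix a divisor there (Lemma \ref{codim}).

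Second, the rigidity input you propose, $\pi_1^{\text{\'et}}(X^{\mathrm{sm}})=1$, is false as stated and cannot be obtained from ``simple-connectedness of $\mathbb{A}^n$'': over $\overline{\mathbb{F}_p}$ affine space has an enormous fundamental group coming from Artin--Schreier coverings. The correct and sufficient statement (since $p\nmid|W|$) is that $X^{\mathrm{sm}}$ admits no nontrivial \'etale covering of degree prime to $p$; the paper's Lemma \ref{connected} proves this using Tange's explicit morphism $\phi:\mathfrak{g}^*_{\bold k}\to X$, which restricts to an isomorphism over the regular semisimple locus, so a $p'$-cover of $X^{\mathrm{sm}}$ pulls back to a cover of an open subset of $\mathfrak{g}^*_{\bold k}$ with codimension-$\ge 2$ complement and must be trivial. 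Your finite map $X\to\mathfrak{g}^{*(1)}$ has degree $p^{l}$ and points the wrong way for descending or extending covers. Third, your endgame recovers only the abstract variety $X$, which is not known to determine $\mathfrak{g}$ (one would need, e.g., to separate types $B_n$ and $C_n$). The paper instead carries the reduction-mod-$p$ Poisson bracket --- itself a derived invariant --- through the entire argument, and extracts $\mathfrak{g}_{\bold k}$ as the quotient of $\mathfrak{m}/\mathfrak{m}^2$ by its abelian part for a maximal Poisson ideal $\mathfrak{m}$ of $Z(U(\mathfrak{g}_{\bold k}))$.
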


We also have a similar result about  the fixed-point subalgebras of rings of differential operators on smooth affine varieties.

\begin{theorem}\label{diff}
Let $X, Y$ be smooth affine simply connected varieties over $\mathbb{C}$. Let $W$ and $W'$ be finite subgroups of automorphisms
of $D(X)$ and $D(Y)$ respectively. If the fixed-point algebras $D(X)^{W}$ and $D(Y)^{W'}$ are derived equivalent, then $W\cong W'.$

\end{theorem}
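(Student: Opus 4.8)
The plan is to mirror the strategy behind Theorem \ref{main}, replacing the Zassenhaus variety by the cotangent bundle of the Frobenius twist. \textbf{Reduction modulo $p$.} First I would choose a finitely generated $\mathbb{Z}$-subalgebra $R \subset \mathbb{C}$ over which $X$, $Y$, the finite groups $W, W'$ together with their actions on $D(X)$ and $D(Y)$, and a two-sided tilting complex realizing the derived equivalence (which exists by Rickard's theorem) are all defined. Reducing modulo a maximal ideal with residue field $k$ of large characteristic $p$, one obtains smooth affine $k$-varieties $X_k, Y_k$, actions of $W, W'$, and, by semicontinuity, a derived equivalence $D(X_k)^W \simeq D(Y_k)^{W'}$. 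I would take $p \gg 0$ so that in addition $p \nmid |W||W'|$ and the prime-to-$p$ \'etale fundamental groups of $X_k, Y_k$ agree with the (trivial) topological ones of $X, Y$; this is where simple connectivity enters.

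\textbf{Centers via the Azumaya structure.} Over $k$, the ring $D(X_k)$ is an Azumaya algebra over its center $Z_X := \mathcal{O}(T^*X_k^{(1)})$, which carries the canonical symplectic Poisson bracket obtained from $\tfrac1p[\,\cdot\,,\cdot\,]$. Since $p \nmid |W|$, the averaging idempotent identifies $D(X_k)^W$ with a corner of the crossed product $D(X_k)\rtimes W$, to which it is Morita equivalent; a direct computation on the locus where $W$ acts freely shows that the center of this crossed product is $\mathcal{O}(T^*X_k^{(1)})^W = \mathcal{O}(T^*X_k^{(1)}/W)$. As the center is $HH^0$ and hence a derived invariant, the derived equivalence yields an isomorphism of $k$-algebras $\mathcal{O}(T^*X_k^{(1)}/W) \cong \mathcal{O}(T^*Y_k^{(1)}/W')$, i.e.\ an isomorphism of affine varieties $T^*X_k^{(1)}/W \cong T^*Y_k^{(1)}/W'$.

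\textbf{Recovering the group.} Because every automorphism of $D(X_k)$ preserves the $\tfrac1p$-bracket, $W$ acts on $T^*X_k^{(1)}$ by symplectomorphisms. Two features of a faithful finite symplectic action are then decisive: the fixed locus of each nontrivial element is a symplectic, hence even-codimensional and so codimension $\geq 2$, subvariety; and no nontrivial element is a pseudo-reflection, so by Chevalley--Shephard--Todd the quotient is singular exactly along the branch locus. Consequently the smooth locus $(T^*X_k^{(1)}/W)^{\mathrm{sm}}$ coincides with the image $V$ of the free locus, over which $T^*X_k^{(1)} \setminus \mathrm{Fix} \to V$ is a Galois $W$-cover. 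Since $T^*X_k^{(1)}$ is smooth and, by the choice of $p$, has trivial prime-to-$p$ \'etale fundamental group, purity removes the codimension-$\geq 2$ fixed locus without changing $\pi_1$, so $W \cong \pi_1^{(p')}\big((T^*X_k^{(1)}/W)^{\mathrm{sm}}\big)$, and likewise for $W'$. The variety isomorphism from the previous step restricts to the smooth loci, whence $W \cong W'$.

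\textbf{Main obstacle.} The delicate point---playing the role of the non-existence of \'etale coverings in Theorem \ref{main}---is the faithfulness of the induced action of $W$ on $T^*X_k^{(1)}$: an element acting trivially on the center is inner by Skolem--Noether, and I must rule out that a nontrivial $w \in W$ becomes inner modulo $p$ for all large $p$ (otherwise only $W$ modulo such a kernel would be recovered). I expect to control this through the units of $D(X_k)$ together with the finite order of $w$. Verifying that the smooth locus equals the free-quotient locus (the absence of symplectic pseudo-reflections) and that the tilting complex stays tilting after reduction are the remaining points requiring care, but they are routine compared with the faithfulness statement.
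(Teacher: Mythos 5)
Your outline follows the paper's proof of Theorem \ref{diff} quite closely: reduce to a finitely generated subring $S\subset\mathbb{C}$, use derived invariance of $HH^0$ to identify the centers after reduction mod $p\gg 0$, identify $Z(D(X_{\bf k})^W)$ with $\mathcal{O}(T^*X_{\bf k}^{(1)})^W$, and recover $W$ from the quotient variety $T^*X_{\bf k}^{(1)}/W$ via the $W$-Galois cover over a large open set, purity, and the vanishing of the prime-to-$p$ \'etale fundamental group of $T^*X_{\bf k}^{(1)}$ (which is exactly where simple connectedness of $X$ enters, via [\cite{T2}, Lemma 5]). Your phrasing of the last step through $\pi_1^{(p')}$ of the smooth locus of the quotient (using absence of symplectic pseudo-reflections) is a mild intrinsic variant of Proposition \ref{key}, which instead works over the common image of the free loci; both are fine, and your observation that only the variety isomorphism (not the Poisson isomorphism) of the quotients is needed for this theorem is correct.

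The one genuine gap is the step you yourself flag: faithfulness of the induced action of $W$ on the center, equivalently that no nontrivial $w\in W$ becomes ($X$-)inner after reduction mod $p$. This cannot be controlled "through the units of $D(X_{\bf k})$" in the way the characteristic-zero intuition suggests: in characteristic $p$ the algebra $D(X_{\bf k})$ is a finite module over its huge center, so it has many nonscalar units, and moreover one must exclude conjugation by elements of the full skew field of fractions ($X$-inner automorphisms in Montgomery's sense), not just by units of the ring. The paper closes this gap with Corollary \ref{action}: by Montgomery's theorem it suffices to show every nontrivial element of $W$ is $X$-outer; writing an alleged inner automorphism as $a\mapsto xax^{-1}$ with $x\in D(X_{\bf k})$ (after clearing a central denominator) and using that a nontrivial semisimple automorphism of order prime to $p$ has an eigenvector $y$ with $xy=\xi yx$, $\xi\neq 1$, one passes to the associated graded algebra $\Gr D(X_{\bf k})=\mathcal{O}(T^*X_{\bf k})$, a commutative domain, and obtains a contradiction. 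The same corollary also gives the Skolem--Noether step (an element acting trivially on the center is inner, hence trivial), so faithfulness follows. Your crossed-product/Morita route to computing the center also quietly presupposes this outerness (fullness of the averaging idempotent and the computation of $Z(D(X_{\bf k})\rtimes W)$ both need it), so the filtration argument is not optional---it is the load-bearing step.
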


These results and their proof are motivated by the following analogue
for Poisson varieties. Throughout by a $p'$-degree we will mean a degree not divisible by $p.$

\begin{prop}\label{key}
Let  $X$ and $Y$ be affine normal Poisson varieties over  an algebraically closed field $\bold{k}$ of characteristic $p,$
such that their symplectic loci do not admit any nontrivial $p'$-degree  \'etale covering  and have complements of codimension
$\geq 2.$ Let $W$ (resp. $W'$) be a finite subgroup of Poisson automorphisms
of $X$ (respectively $Y$) of order not divisible by $p$. If $X/W\cong Y/W'$ as Poisson varieties,
then there exists an isomorphism of Poisson varieties $f:X\cong Y$ such that
$f_*(W)=W'$, where $f_*:\Aut(X)\to \Aut(Y)$ is the induced isomorphism.

\end{prop}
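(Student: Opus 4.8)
The plan is to realize both $X$ and $Y$ as the universal prime-to-$p$ \'etale cover of one common open subset of $Z:=X/W\cong Y/W'$, and then to descend the resulting equivariant isomorphism to all of $X$ and $Y$ using normality. Since $W$ and $W'$ act by Poisson automorphisms, $Z$ carries a quotient Poisson structure, and the hypothesis furnishes a single Poisson variety $Z$ together with finite Poisson quotient maps $\pi\colon X\to Z$ and $\pi'\colon Y\to Z$.

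First I would isolate the right open locus. Let $X^{\circ}$ be the symplectic locus of $X$ and $X^{f}$ the locus on which $W$ acts freely. Because $|W|$ is prime to $p$, the action of any stabilizer on a tangent space is diagonalizable and preserves the symplectic form, so every fixed locus inside $X^{\circ}$ is a symplectic, hence even-codimensional, subvariety; thus $X^{\circ}\setminus X^{f}$ has codimension $\geq 2$ in $X^{\circ}$. Hence $V:=X^{\circ}\cap X^{f}$ is a $W$-stable open set whose complement in $X^{\circ}$ has codimension $\geq 2$, and $V\to V/W=:U_X\subseteq Z$ is a connected \'etale $W$-Galois cover. Defining $U_Y$ and $V'$ analogously for $Y$ and setting $U:=U_X\cap U_Y$, the complement $Z\setminus U$ has codimension $\geq 2$: it is contained in $\pi(X\setminus V)\cup\pi'(Y\setminus V')$, and $X\setminus V$ has codimension $\geq 2$ (being the union of the codimension $\geq 2$ sets $X\setminus X^{\circ}$ and $X^{\circ}\setminus V$), which persists under the finite maps $\pi,\pi'$. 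Over $U$ the maps $\pi^{-1}(U)\to U$ and $(\pi')^{-1}(U)\to U$ are connected \'etale Galois covers with groups $W$ and $W'$.

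Next I would invoke Zariski--Nagata purity. Since $X^{\circ}$ is smooth and $\pi^{-1}(U)$ is obtained from $X^{\circ}$ by deleting a closed set that remains of codimension $\geq 2$ (\'etale maps preserve codimension), restriction is an equivalence on finite \'etale covers; in particular $\pi^{-1}(U)$ inherits from $X^{\circ}$ the property of admitting no nontrivial $p'$-degree \'etale cover, i.e. $\pi_1^{p'}(\pi^{-1}(U))=1$, and likewise for $(\pi')^{-1}(U)$. Feeding this into the tame Galois exact sequence $1\to\pi_1^{p'}(\pi^{-1}(U))\to\pi_1^{p'}(U)\to W\to 1$ (valid as $W$ is a $p'$-group) yields $\pi_1^{p'}(U)\cong W$, and symmetrically $\pi_1^{p'}(U)\cong W'$; thus $W\cong W'$, and both covers are the finite (since $\pi_1^{p'}(U)$ is finite) universal $p'$-cover of $U$. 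Hence there is an isomorphism $g\colon\pi^{-1}(U)\xrightarrow{\sim}(\pi')^{-1}(U)$ over $U$, and identifying $W,W'$ with $\pi_1^{p'}(U)$ as deck groups shows $g$ intertwines the two actions via some isomorphism $\phi\colon W\xrightarrow{\sim}W'$. As $\pi,\pi'$ are \'etale and Poisson and $g$ covers the identity of $U$, the map $g$ is automatically a Poisson isomorphism.

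Finally I would extend $g$ to the whole varieties. Since $X,Y$ are normal and affine and $g$ is an isomorphism between open subsets with codimension $\geq 2$ complements, Hartogs' principle gives $\bggo(X)=\bggo(\pi^{-1}(U))\cong\bggo((\pi')^{-1}(U))=\bggo(Y)$, producing a Poisson isomorphism $f\colon X\xrightarrow{\sim}Y$ restricting to $g$. As $fwf^{-1}$ and $\phi(w)$ are automorphisms of $Y$ agreeing on the dense open $(\pi')^{-1}(U)$, they coincide, so $f_*(W)=W'$. I expect the main obstacle to be the third step: confirming that purity and the tame Galois sequence are available in this $p'$ setting and, more delicately, arranging the deck-group identification so that the isomorphism of universal covers is genuinely $W$--$W'$-equivariant rather than merely realizing the abstract isomorphism $W\cong W'$. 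The codimension bound on the non-free locus must also be verified with care, since it is what simultaneously guarantees the large complement of $U$ and the applicability of purity.
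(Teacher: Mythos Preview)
Your proposal is correct and follows essentially the same route as the paper: restrict to a common open $U\subset Z$ over which both quotient maps are free \'etale $p'$-Galois covers, use Zariski--Nagata purity to transfer the ``no nontrivial $p'$-cover'' hypothesis from the symplectic loci to the preimages of $U$, conclude that both preimages realize the universal $p'$-cover of $U$ (hence are equivariantly isomorphic), and extend to $X\cong Y$ by normality and the codimension~$\geq 2$ condition. The one minor difference is in how you verify that the isomorphism $g$ is Poisson: the paper invokes a short lemma (Lemma~\ref{elementary}) saying that an algebra isomorphism which is Poisson on a subalgebra of degree~$<p$ is Poisson everywhere, whereas you argue directly that since $\pi,\pi'$ are \'etale Poisson maps and $g$ lies over $\mathrm{id}_U$, the Poisson bivectors must match. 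Both arguments are valid.
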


 Proofs of our main results are based on the reduction modulo a very large prime technique, which
allows a passage to Proposition \ref{key}.

Throughout, given an abelian group $L$, by $L_p$ we denote its reduction modulo $p: L_p=L/pL.$
   We now recall the crucial definition of a Poisson bracket on the center of a reduction
modulo $p$ of an algebra. Given an associative flat $\mathbb{Z}$-algebra $R$ and a prime number $p,$
 then the center $Z(R_p)$ of its reduction modulo $p$  acquires the natural Poisson bracket, which
 we  refer to as the reduction modulo $p$ Poisson bracket,
defined as follows. Given $a, b\in Z(R_p)$, let $z, w\in R$ be their lifts respectively. 
Then the Poisson bracket $\lbrace a, b \rbrace$ is defined to be 
$$\frac{1}{p}[z, w] \mod p\in Z(R_p).$$

 This way we obtain a natural homomorphism from $\Aut(R)$ to the group of Poisson algebra automorphisms
 of $R_p$.


\section{Some results on centers of fixed rings}

In this section we recall a result from \cite{M} and apply it
to our situation. At first, recall the following [\cite{M}, Definition on p. 42].

\begin{defin}
Let $A$ be a Noetherian domain, $Q$ -- its skew field of fractions.
Then  a ring automorphism $f:A\to A$ is said to be an $X$-inner automorphism
if there exists $s\in Q$ such that $f(a)=sas^{-1}$ for all $a\in A$. If $f$ is not $X$-inner, then it
is said to be an $X$-outer automorphism. 
\end{defin}

Clearly in  the above definition, if $A$ is a finite module over its center $Z$, then an $X$-outer automorphism is 
just an outer one. The following lemma is a much weaker version of [\cite{M},  Corollary 6.17], which will
be sufficient for our purposes.

\begin{lemma}\label{M}
Let $A$ be a Noetherian domain, and let $G$ be a finite subgroup of $\Aut(A).$
If all nontrivial elements of $G$ are $X$-outer, then $Z(A^G)=Z(A)^G.$
\end{lemma}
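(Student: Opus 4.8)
The inclusion $Z(A)^G\subseteq Z(A^G)$ is immediate: an element that is central in $A$ and fixed by $G$ lies in $A^G$ and commutes with everything in $A^G$. For the reverse inclusion, fix $z\in Z(A^G)$; since $z\in A^G$ it is automatically $G$-fixed, so it suffices to prove $z\in Z(A)$. The plan is to pass to the Goldie quotient division ring $Q=\mathrm{Frac}(A)$, which exists because $A$ is a Noetherian domain and on which $G$ acts by extending the automorphisms. By definition an automorphism of $A$ is $X$-inner exactly when its extension to $Q$ is an inner automorphism of $Q$, so the hypothesis says that every nontrivial $g\in G$ acts as an \emph{outer} automorphism of $Q$. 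Since $Z(Q)\cap A=Z(A)$, it is enough to show $z\in Z(Q)$.

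The heart of the argument is a computation in the skew group ring $S=Q*G$ together with the Galois theory of division rings. First I claim that $Z(S)=Z(Q)^G$. Indeed, if $\sum_{g}q_g\,g\in S$ is central, then commuting with every $a\in Q$ forces $q_g\,g(a)=a\,q_g$ for all $a$; for $g\neq 1$ this would exhibit $g$ as conjugation by $q_g^{-1}$, i.e.\ as an inner automorphism of $Q$, contradicting outer-ness, so $q_g=0$ for $g\neq 1$, while $q_1\in Z(Q)$, and commuting with the group elements then forces $q_1\in Q^G$. This is exactly where outer-ness enters. Next, because $G$ is a finite group of outer automorphisms of the division ring $Q$, Jacobson's Galois theory gives $[Q:Q^G]=|G|$ together with an isomorphism of rings
\[
\Phi:\;S=Q*G\;\xrightarrow{\ \sim\ }\;\End_{Q^G}(Q),\qquad \Phi(q*g)=\bigl(x\mapsto q\,g(x)\bigr),
\]
where $Q$ is viewed as a right $Q^G$-vector space. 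Since a ring isomorphism preserves centers, $\Phi$ identifies $Z(S)$ with $Z(\End_{Q^G}(Q))$, and the latter is precisely the set of right multiplications $R_c:x\mapsto xc$ by $c\in Z(Q^G)$.

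Combining the two computations identifies the subsets $\{L_c:c\in Z(Q)^G\}$ and $\{R_c:c\in Z(Q^G)\}$ of $\End_{Q^G}(Q)$, where $L_c$ denotes left multiplication; evaluating an equation $L_c=R_{c'}$ at $x=1$ gives $c=c'$, whence $Z(Q^G)=Z(Q)^G$. In particular every element of $Q^G$ that centralizes $Q^G$ is central in $Q$. It remains only to check that our $z$, which centralizes $A^G$, in fact centralizes all of $Q^G$; this follows from the standard fact that for an $X$-outer action one has $Q^G=\mathrm{Frac}(A^G)$ (equivalently, the trace map is nondegenerate and $A$ is a $G$-Galois extension of $A^G$), so that $z\in Z(Q^G)=Z(Q)^G\subseteq Z(Q)$, giving $z\in Z(Q)\cap A=Z(A)$ as required.

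I expect the two genuinely nontrivial inputs, and hence the main obstacles, to be the Jacobson Galois isomorphism $\Phi$ for division rings and the descent $Q^G=\mathrm{Frac}(A^G)$; both are precisely the substance packaged in the stronger statement [\cite{M}, Corollary 6.17], so in practice one may simply invoke that theory rather than reprove it. Everything else is formal, and notably the central cross-term computation $Z(S)=Z(Q)^G$ is where the $X$-outer hypothesis is indispensable.
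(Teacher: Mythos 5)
The paper does not actually prove this lemma: it is stated as ``a much weaker version of [\cite{M}, Corollary~6.17]'' and used as a black box, so there is no in-paper argument to measure yours against. Your reconstruction is correct and follows the standard route one finds in Montgomery's notes: pass to the quotient division ring $Q$, note that $X$-outer means outer on $Q$, compute $Z(Q\ast G)=Z(Q)^G$ (your cross-term argument is right, and this is indeed the one place outerness is indispensable), and then use Jacobson's outer Galois theory to identify $Q\ast G$ with $\End_{Q^G}(Q)$ and hence $Z(Q)^G$ with $Z(Q^G)$; the evaluation-at-$1$ trick to match $L_c$ with $R_{c'}$ is clean. The one step you should not wave at is $Q^G=\mathrm{Frac}(A^G)$: the inclusion $\mathrm{Frac}(A^G)\subseteq Q^G$ is formal, but equality is a genuine theorem requiring the $X$-outer hypothesis (it is where nonvanishing of the trace map and primeness of $A^G$ enter, and without such a hypothesis the inclusion can be strict), so it deserves at least as explicit a citation as the Galois isomorphism. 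In effect your two flagged inputs reassemble exactly the machinery that [\cite{M}, Corollary~6.17] packages --- that corollary is stronger, describing $Z(R^G)$ for an arbitrary finite $G$ in terms of its $X$-inner part, which is why the paper calls the lemma a weak version --- so your write-up is best read as a correct guided derivation of the special case from the same underlying theory rather than an independent elementary proof; for the purposes of this paper, simply invoking the corollary, as the author does, is the economical choice.
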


We will use the following  simple corollary of this result. Its proof is essentially identical to [\cite{T}, Proposition 1, the proof of Theorem 1]. 

\begin{cor}\label{action}
Let $\bf{k}$ be a field and let $A$ be a $\bf{k}$-domain
equipped with $\bf{k}$-algebra filtration concentrated in nonnegative degrees, such that its associated graded algebra
$\Gr(A)$ is a commutative domain. Assume that  $A$ is finite over its center.
Let $G\subset \Aut(A)$ be a finite subgroup,
such that $\bf{k}$ contains a primitive $|G|$-th root of unity. Then $Z(A^G)=Z(A)^G$. Moreover,
$G$ acts faithfully on $Z(A).$
\end{cor}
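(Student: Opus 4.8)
The plan is to deduce both assertions from a single claim: \emph{no nontrivial element of $G$ is $X$-inner.} Since $A$ is module-finite over its center $Z=Z(A)$, the notions of $X$-outer and outer coincide (as noted after the definition), and $A$ is a PI domain, hence an Ore domain with division ring of fractions $Q$, finite-dimensional over its center $k=\mathrm{Frac}(Z)=Z(Q)$; in particular $Q$ is a central simple $k$-algebra, and in our setting $A$ is Noetherian, so Lemma \ref{M} applies. Granting the claim, every nontrivial element of $G$ is $X$-outer, whence Lemma \ref{M} gives $Z(A^G)=Z(A)^G$. For faithfulness, suppose $1\neq g\in G$ fixes $Z$ pointwise; extending $g$ to $Q$, it fixes $k=Z(Q)$ pointwise, so by Skolem--Noether $g=\mathrm{conj}(s)$ for some $s\in Q^{\times}$, i.e.\ $g$ is $X$-inner, contradicting the claim. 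Hence $G$ acts faithfully on $Z(A)$.

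To prove the claim, suppose $g\in G$ is $X$-inner of order $n>1$, say $g(x)=sxs^{-1}$ with $s\in Q^{\times}$. Writing $Q=S^{-1}A$ for the central multiplicative set $S=Z\setminus\{0\}$, we may take $s=az^{-1}$ with $a\in A$ and $z\in S$; since $z$ is central, $g=\mathrm{conj}(a)$ with $a\in A\setminus\{0\}$. Because $\mathbf{k}$ contains a primitive $|G|$-th root of unity and $n\mid |G|$, there is a primitive $n$-th root of unity $\zeta\in\mathbf{k}$ (so $\mathrm{char}\,\mathbf{k}\nmid n$), and $g$, satisfying $g^{n}=1$, is diagonalizable: $A=\bigoplus_{i\in\mathbb{Z}/n}A_i$ with $A_i=\{x\in A: g(x)=\zeta^{i}x\}$. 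This is a $\mathbb{Z}/n$-grading since $g$ is an algebra automorphism. As $g\neq 1$, some $A_i$ with $i\not\equiv 0$ contains a nonzero element $x$, and then $ax=\zeta^{i}xa$ in $A$.

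Now I pass to principal symbols. For $0\neq u\in A$ let $\sigma(u)\in\Gr(A)$ denote its leading symbol; since $\Gr(A)$ is a domain, $\sigma$ is multiplicative and $\sigma(u)\neq 0$. Applying $\sigma$ to $ax=\zeta^{i}xa$ gives $\sigma(a)\sigma(x)=\zeta^{i}\sigma(x)\sigma(a)$ in $\Gr(A)$. But $\Gr(A)$ is commutative, so $\sigma(a)\sigma(x)=\sigma(x)\sigma(a)$ is a nonzero element of the domain $\Gr(A)$; cancelling it forces $\zeta^{i}=1$, i.e.\ $i\equiv 0$, a contradiction. This proves the claim, and with it the corollary.

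The substantive step is the symbol computation: the commutativity of $\Gr(A)$ together with the presence of the root of unity $\zeta$ is exactly what converts the eigenvalue $\zeta^{i}$ into the equation $\zeta^{i}=1$, and the reduction of the inner implementer from $Q$ into $A$ (absorbing the central denominator) is what lets me argue with honest symbols in $\Gr(A)$ rather than in a graded ring of fractions. The remaining ingredients—that a domain module-finite over its center is a PI domain with a division ring of fractions to which automorphisms extend, the Skolem--Noether step, and the identification of $X$-outer with outer in the module-finite setting—are standard, and I would invoke them without further comment, following \cite{T}.
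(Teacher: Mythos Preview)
Your proof is correct and follows essentially the same approach as the paper's: reduce the $X$-inner implementer to conjugation by an element $a\in A$ via a central denominator, use diagonalizability of $g$ (from the root-of-unity hypothesis) to find an eigenvector $x$ with eigenvalue $\zeta^{i}\neq 1$, and then pass to principal symbols in the commutative domain $\Gr(A)$ to force $\zeta^{i}=1$; the faithfulness on $Z(A)$ is deduced via Skolem--Noether exactly as in the paper. Your write-up is in fact more careful than the paper's in justifying diagonalizability and the central-localization description of $Q$.
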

\begin{proof}
In view of Lemma \ref{M}, in order to prove  $Z(A^G)=Z(A)^G,$ it suffices to show that every nonidentity element of $G$ is an
outer automorphism.
Indeed, let $\phi\in G$ be a nontrivial inner automorphism. Let $l$ be the order of $ \phi$, hence $\bf{k}$ contains a primitive 
$l$-th rooth of unity. Let $x\in A, z\in Z(A)$ be such that $\phi(a)=(xz^{-1})a(xz^{-1})^{-1}$ for all $a\in A.$
Thus $\phi(a)=xax^{-1}.$
Since $\phi$ is a nontrivial semisimple automorphism, it has an eigenvalue not equal to 1.
 Let $\xi\neq 1$ be an eigenvalue of $\phi$
with an eigenvector $y\in A, y\neq 0.$ So $xy=\xi yx.$ Hence $\Gr(x)\Gr(y)=\xi \Gr(y)\Gr(x),$ which is a contradiction
since $\Gr(A)$ is a commutative domain.
   
    Now, suppose that $\phi\in \Aut(A)$ is a finite order (order dividing $|G|$) automorphism that acts on $Z(A)$ trivially.
 Let $D$ be the skew field of fractions of $A$ (obtained by inverting nonzero elements of $Z(A)$.) Thus $\phi\in \Aut(D)$
 fixes the center of $D$. Therefore, by the Skolem-Noether theorem, $\phi$ is an inner automorphism of $D$, hence
 an inner automorphism of $A$. Then the above argument shows that $\phi=\Id$. Hence, $G$ acts faithfully
 on $Z(A).$

\end{proof}

\section{Description of  centers of $U(\mathfrak{g}_{\bf{k}}), D(X_{\bf{k}})$}

Let $\mathfrak{g}$ be a complex semisimple Lie algebra, let $G$ be the corresponding
simply connected semisimple algebraic group. Let $\mathfrak{g}_{\mathbb{Z}}, G_{\mathbb{Z}},$
be integral models of $\mathfrak{g}, G,$ respectively.

In this section we recall
some well-known facts and fix the notation about the center of the enveloping algebra 
of $\mathfrak{g}_{\bf{k}}=\mathfrak{g}_{\mathbb{Z}}\otimes\bold{k}$, where 
 $\bf{k}$ is a field of characteristic
$p\gg 0.$ Since we will only be interested in the center of $Z(U(\mathfrak{g}_{\bf{k}}))$ for very large primes
$p$, the choice of an integral model $\mathfrak{g}_{\mathbb{Z}}$ is irrelevant.
     
        Let $l=\text{rank}(\mathfrak{g}),$ and let $f_1,\cdots, f_l\in Z(U(\mathfrak{g}_{\mathbb{Z}}))$ be central elements
        that generate the center of $U(\mathfrak{g}).$ Given a field 
       $\bf{k}$ of characteristic $p\gg 0$, we will denote by $\bar{f}_i, 1\leq i\leq l$ the image of $f_i$
       under the base change homomorphism $U(\mathfrak{g}_{\mathbb{Z}})\to U(\mathfrak{g}_{\bold{k}}).$
Put $\mathfrak{g}_p=\mathfrak{g}_{\mathbb{Z}}/p\mathfrak{g}_{\mathbb{Z}}.$
 Recall that the $p$-center of $U(\mathfrak{g}_p)$, to be denoted by $Z_p(\mathfrak{g}_p),$
    is generated by
  elements of the form $x^p-x^{[p]}, x\in \mathfrak{g}_p.$ It is well-known that we have an isomorphism
  $\Sym(\mathfrak{g}_p)\cong Z_p(\mathfrak{g}_p)$ of $\mathbb{F}_p$-algebras given by $x\to x^p-x^{[p]}, x\in\mathfrak{g}_p.$
Now recall that the reduction modulo $p$ Poisson bracket on $Z(U(\mathfrak{g}_p))$ restricts  
     on $Z_p(\mathfrak{g}_p)$
to the negative of the Kirillov-Kostant bracket \cite{KR}
$$\lbrace a^p-a^{[p]}, b^p-b^{[p]}\rbrace=-([a, b]^p-[a,b]^{[p]}),\quad a\in \mathfrak{g}_p, b\in \mathfrak{g}_p.$$


Let $\bf{k}$ be  an algebraically closed field of characteristic $p.$ Thus 
$Z(U(\mathfrak{g}_{\bold{k}}))=Z(U(\mathfrak{g}_p))\otimes\bold{k}$ can be equipped with the corresponding
$\bf{k}$-linear Poisson bracket.
Denote by $Z_0(\mathfrak{g}_{\bold{k}})$ the image of $Z(U(\mathfrak{g}_{\mathbb{Z}}))$ in 
  $Z(U(\mathfrak{g}_{\bold{k}}))$ (the Harish-Chandra part of the center). So $Z_0(\mathfrak{g}_{\bold{k}})=\bold{k}[\bar{f_1},\cdots, \bar{f_l}].$
Clearly, $Z_0(\mathfrak{g}_{\bold{k}})$
lies in the Poisson center of $Z(U(\mathfrak{g}_{\bold{k}})).$

 Let  $\chi:Z_0(\mathfrak{g}_{\bold{k}})\to \bold{k}$ be a character. 
Then the quotient 
$$Z_{\chi}=Z(U(\mathfrak{g}_{\bf{k}})/\ker{\chi})=Z(U(\mathfrak{g}_{\bold{k}}))/\ker(\chi)$$
is equipped with the induced Poisson bracket.

Next we  recall a well-known theorem of Veldkamp  (see for example [\cite{Ta} Theorem 1.6] or [\cite{MR} Cor.3]) describing the center of $U(\mathfrak{g}_{\bf{k}}).$

\begin{theorem}\label{Veldkamp}
$Z(U(\mathfrak{g}_{\bold{k}}))$ is a free $Z_p(\mathfrak{g}_{\bold{k}})$-module with a basis $\lbrace \bar{f_1}^{a_1}\cdots \bar{f_l}^{a_l}, 0\leq a_i<p\rbrace,$
and $U(\mathfrak{g}_{\bold{k}})^{G_{\bf{k}}}=Z_0(\mathfrak{g}_{\bf{k}}).$
Moreover, we have an isomorphism induced by the multiplication map 
$$Z_p(\mathfrak{g}_{\bold{k}})\otimes_{Z_p(\mathfrak{g}_{\bold{k}})^{G_{\bf{k}}}}U(\mathfrak{g}_{\bold{k}})^{G_{\bf{k}}}\to Z(U(\mathfrak{g}_{\bold{k}})).$$

\end{theorem}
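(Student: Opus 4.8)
The plan is to reduce the entire statement to the characteristic-zero structure theory of Kostant, transported to $\bold{k}$ via flat integral models. Write $Z=Z(U(\mathfrak{g}_{\bold{k}}))$ and abbreviate $Z_p=Z_p(\mathfrak{g}_{\bold{k}})$, $Z_0=Z_0(\mathfrak{g}_{\bold{k}})$. Over $\mathbb{C}$ one has: $\Sym(\mathfrak{g})^{G}=\mathbb{C}[p_1,\dots,p_l]$ is a polynomial ring on the principal symbols $p_i$ of the $f_i$; the algebra $\Sym(\mathfrak{g})$ is free over $\Sym(\mathfrak{g})^G$ (Kostant's separation of variables $\Sym(\mathfrak{g})=\Sym(\mathfrak{g})^G\otimes\mathcal{H}$); and $U(\mathfrak{g})$ is free over its center. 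Each input excludes only finitely many primes, so for $p\gg0$ they hold verbatim over $\bold{k}$: in particular $\Sym(\mathfrak{g}_{\bold{k}})^{G_{\bold{k}}}=\bold{k}[p_1,\dots,p_l]$ with $p_i=\Gr(\bar f_i)$ for the PBW filtration, and $\Sym(\mathfrak{g}_{\bold{k}})$ is free over $\Sym(\mathfrak{g}_{\bold{k}})^{G_{\bold{k}}}$. I would treat these as the external inputs and build everything from associated-graded arguments.

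First I would prove $U(\mathfrak{g}_{\bold{k}})^{G_{\bold{k}}}=Z_0$. The PBW filtration is $G_{\bold{k}}$-stable and $\Gr U(\mathfrak{g}_{\bold{k}})=\Sym(\mathfrak{g}_{\bold{k}})$ equivariantly. The symbol of an invariant is invariant, so $\Gr(U(\mathfrak{g}_{\bold{k}})^{G_{\bold{k}}})\subseteq\Sym(\mathfrak{g}_{\bold{k}})^{G_{\bold{k}}}=\bold{k}[p_i]$; on the other hand $Z_0=\bold{k}[\bar f_i]\subseteq U(\mathfrak{g}_{\bold{k}})^{G_{\bold{k}}}$ has associated graded exactly $\bold{k}[p_i]$, since the $\bar f_i$ commute and are algebraically independent. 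Thus the inclusion $Z_0\subseteq U(\mathfrak{g}_{\bold{k}})^{G_{\bold{k}}}$ induces an equality of associated graded rings, and the standard filtered comparison yields $Z_0=U(\mathfrak{g}_{\bold{k}})^{G_{\bold{k}}}$.

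Next, the freeness and basis. Since $\Gr(x^p-x^{[p]})=x^p$, we have $\Gr Z_p=\bold{k}[x^p:x\in\mathfrak{g}_{\bold{k}}]=\Sym(\mathfrak{g}_{\bold{k}})^{(1)}$, the Frobenius subring, while the symbols of the candidate basis $\{\bar f^{\underline a}:0\le a_i<p\}$ are the monomials $\{p^{\underline a}\}$. Linear independence of $\{\bar f^{\underline a}\}$ over $Z_p$ then reduces to independence of $\{p^{\underline a}\}$ over $\Sym(\mathfrak{g}_{\bold{k}})^{(1)}$ inside $\Sym(\mathfrak{g}_{\bold{k}})$; I would deduce this from separation of variables by writing $\Sym(\mathfrak{g}_{\bold{k}})^{(1)}=(\Sym(\mathfrak{g}_{\bold{k}})^{G_{\bold{k}}})^{(1)}\otimes\mathcal{H}^{(1)}$ and projecting a relation onto the $\mathcal{H}$-basis, which collapses the claim to freeness of $\bold{k}[p_i]$ over $\bold{k}[p_i^p]$. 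For spanning I would show $\Gr Z=\Sym(\mathfrak{g}_{\bold{k}})^{(1)}\cdot\bold{k}[p_i]$: the containment $\Gr Z\subseteq$ Poisson center is automatic, and for $p\gg0$ the Poisson center of $(\Sym(\mathfrak{g}_{\bold{k}}),\{\,,\,\})$ equals exactly $\Sym(\mathfrak{g}_{\bold{k}})^{(1)}\cdot\bold{k}[p_i]$, which already contains $\Gr Z_p\cdot\Gr Z_0$ (alternatively, a dimension/PI-degree count shows $Z$ has generic rank $p^l$ over the polynomial ring $Z_p$). Either way the $p^l$ independent elements $\{\bar f^{\underline a}\}$ form a $Z_p$-basis.

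Finally, the multiplication isomorphism. Because invariant $p$-central elements are invariant, $Z_p^{G_{\bold{k}}}=Z_p\cap Z_0\subseteq Z_0$, so $Z_p\otimes_{Z_p^{G_{\bold{k}}}}Z_0\to Z$ is well defined and, by the spanning above, surjective. For injectivity I would show $Z_0$ is free over $Z_p^{G_{\bold{k}}}$ on $\{\bar f^{\underline a}\}$: passing to symbols, $\Gr Z_p^{G_{\bold{k}}}=\Sym(\mathfrak{g}_{\bold{k}})^{(1)}\cap\bold{k}[p_i]=\bold{k}[p_i^p]$, over which $\Gr Z_0=\bold{k}[p_i]$ is free on $\{p^{\underline a}\}$, and lifting through the filtration transfers this freeness to $Z_0$. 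Hence $Z_p\otimes_{Z_p^{G_{\bold{k}}}}Z_0$ is $Z_p$-free on $\{1\otimes\bar f^{\underline a}\}$ and maps to the $Z_p$-basis $\{\bar f^{\underline a}\}$ of $Z$, so it is an isomorphism. The main obstacle throughout is the purely inseparable Frobenius step, namely the linear disjointness of $\Sym(\mathfrak{g}_{\bold{k}})^{G_{\bold{k}}}$ and $\Sym(\mathfrak{g}_{\bold{k}})^{(1)}$ over $\bold{k}[p_i^p]$, together with the bookkeeping needed to guarantee that the full Kostant package and the Poisson-center computation genuinely survive reduction for $p\gg0$.
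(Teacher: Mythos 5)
The paper itself offers no proof of this statement: Theorem \ref{Veldkamp} is recalled as a known result of Veldkamp, with pointers to [Ta, Theorem 1.6] and [MR, Cor.\ 3], so there is no internal argument to measure yours against. Your outline does reproduce the standard strategy of those sources (PBW filtration, Kostant's separation of variables, $\Gr Z_p=\Sym(\mathfrak{g}_{\mathbf{k}})^{(1)}$, comparison of associated graded rings), and the deductions of $U(\mathfrak{g}_{\mathbf{k}})^{G_{\mathbf{k}}}=Z_0$ and of the tensor-product decomposition from the other two assertions are sound. But the proposal has a genuine gap exactly at the theorem's core: the spanning step. You reduce it to the identity $\Sym(\mathfrak{g}_{\mathbf{k}})^{\mathfrak{g}_{\mathbf{k}}}=\Sym(\mathfrak{g}_{\mathbf{k}})^{(1)}\cdot\mathbf{k}[p_1,\dots,p_l]$ (the Poisson center of the Kirillov--Kostant bracket) and assert it ``for $p\gg 0$'' as if it were one more characteristic-zero fact that survives reduction. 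It is not: the Frobenius subring $\Sym(\mathfrak{g}_{\mathbf{k}})^{(1)}$ has no characteristic-zero shadow, so no spreading-out argument from Kostant's $\Sym(\mathfrak{g})^{\mathfrak{g}}=\mathbb{C}[p_1,\dots,p_l]$ can produce it; proving this containment is precisely the content of Veldkamp's theorem. The known argument (as in [MR]) shows that $\Sym(\mathfrak{g}_{\mathbf{k}})^{(1)}[p_1,\dots,p_l]$ is a normal domain, identifies its fraction field with the field of adjoint invariants by a degree count over $\mathrm{Frac}(\Sym(\mathfrak{g}_{\mathbf{k}})^{(1)})$ using that generic coadjoint orbits have codimension $l$, and concludes by integrality plus normality. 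Without some version of this, your proof of spanning (and hence of the basis claim and the surjectivity of the multiplication map) is circular or incomplete; the alternative PI-degree count you mention likewise rests on an unproved input (the generic irreducible representations have dimension $p^{(\dim\mathfrak{g}-l)/2}$).

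A second, repairable, flaw: the linear-independence step via ``projecting onto the $\mathcal{H}$-basis'' does not work as written, because Frobenius does not respect the harmonic decomposition --- for $h\in\mathcal{H}$ the element $h^p$ is generally not in $\mathcal{H}$, so a relation $\sum_{\underline{a}} g_{\underline{a}}^p\, p^{\underline{a}}=0$ does not decouple coordinatewise after expanding the $h_\beta^p$ back in the basis $\{p^{\underline{\alpha}}h_\gamma\}$. The clean route is the differential criterion for $p$-independence: since $dp_1,\dots,dp_l$ are generically linearly independent (a genuine characteristic-zero fact, hence valid for $p\gg 0$), the elements $p_1,\dots,p_l$ form part of a $p$-basis of $\mathrm{Frac}(\Sym(\mathfrak{g}_{\mathbf{k}}))$ over its subfield of $p$-th powers, which gives the independence of $\{p^{\underline{a}}:0\le a_i<p\}$ over $\Sym(\mathfrak{g}_{\mathbf{k}})^{(1)}$ directly. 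Given that the paper deliberately quotes this theorem rather than proving it, the practical recommendation is to do the same, or to import the missing invariant-theoretic computation explicitly from [MR] or [Ta].
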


In particular, the above description of  $Z(U(\mathfrak{g}_{\bf{k}}))$ implies that $\spec Z_{\chi}$ is isomorphic as a Poisson variety
to $\mu^{-1}(\chi')$, where 
$\mu:\mathfrak{g}_{\bf{k}}^*\to \spec(Z_0(\mf{g}_{\bf{k}}))\cong\mathfrak{g}_{\bf{k}}^*//G_{\bf{k}}$ is the usual map and $\chi'\in \mathfrak{g}_{\bf{k}}^*//G_{\bf{k}}$
(we do not need to know a precise formula for $\chi'$ here). Therefore, the symplectic locus of $\spec Z_{\chi}$ has a 
complement of codimension at least 2.

  Now let $S\subset\mathbb{C}$ be a finitely generated ring, and let $X$ be a smooth affine variety $X$ over $S.$ 
  Then the center of the reduction  modulo $p$ of its ring
  of (crystalline) differential operators $D(X_p)=D(X)/pD(X)$ is isomorphic to the Frobenius twist of the ring of regular functions
  on the cotangent bundle of $X_p$ (see \cite{BMR}). Moreover, the reduction
  modulo $p$ Poisson bracket on $Z(D(X_p))$ equals to the negative of the usual
  Poisson bracket of the cotangent bundle $T^*(X_p)$. In particular, given a base change $S\to\bf{k}$ to an algebraically closed
  field of characteristic $p,$ then under the induces $\bf{k}$-linear Poisson bracket $\spec Z(D(X)\otimes_S\bold{k})$
  is a symplectic variety.

\section{Proofs}

At first, recall the following well-known result  from algebraic geometry about purity of the branched locus [\cite{SGA} Corollaire 3.3. ].

\begin{theorem}\label{etale}
Let $X$ be a regular connected Noetherian scheme over an algebraically closed field $\bold{k},$ let $U\subset X$ be a nonempty connected open subset.
Then the corresponding map of the \'etale fundamental groups $\pi_1(U)\to \pi_1(X)$ is surjective, and
it is an isomorphism if $X\setminus U$ has codimension $\geq 2$.
\end{theorem}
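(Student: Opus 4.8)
The plan is to reinterpret the statement through the dictionary of Galois categories, so that both assertions become facts about the restriction functor
$$F\colon \{\text{finite \'etale covers of } X\}\longrightarrow \{\text{finite \'etale covers of } U\},\qquad (Y\to X)\mapsto (Y\times_X U\to U).$$
Surjectivity of $\pi_1(U)\to\pi_1(X)$ is equivalent to $F$ sending connected objects to connected objects, while the isomorphism assertion is equivalent to $F$ being an equivalence of categories (once compatible geometric base points are chosen).

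For the surjectivity I would argue as follows. Let $Y\to X$ be a connected finite \'etale cover. Since $X$ is regular, hence normal and connected, it is integral; an \'etale cover of a normal scheme is again normal, and a connected normal scheme is irreducible, so $Y$ is irreducible. Then $Y\times_X U$ is a nonempty open subscheme of the irreducible scheme $Y$, hence connected. By the standard connectivity criterion for Galois categories, this is precisely the surjectivity of $\pi_1(U)\to\pi_1(X)$, and it requires only normality of $X$, not the codimension hypothesis.

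For the isomorphism under the assumption $\mathrm{codim}(X\setminus U)\geq 2$, the content is that $F$ is essentially surjective and fully faithful, i.e. that every connected finite \'etale cover $V\to U$ extends, uniquely, to a finite \'etale cover of $X$. The natural candidate is standard: $V$ is integral with function field $K(V)$ a finite separable extension of $K(X)=K(U)$, and I would let $Y\to X$ be the normalization of $X$ in $K(V)$, which is finite over $X$ in the excellent Noetherian setting relevant to our applications and agrees with $V$ over $U$ because $V$ is already normal and finite over $U$. Full faithfulness then comes for free, since finite morphisms between normal schemes that agree over the dense open $U$ agree everywhere.

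The crux, and the step I expect to be the main obstacle, is to show that $Y\to X$ is \'etale over all of $X$, not merely over $U$. The branch locus $B\subset X$, where $Y\to X$ fails to be \'etale, is closed and contained in $X\setminus U$, hence of codimension $\geq 2$. I would invoke the Zariski--Nagata purity of the branch locus: for a finite dominant morphism $Y\to X$ from a normal scheme to a regular scheme, $B$ is either empty or of pure codimension $1$. Since $\mathrm{codim}\,B\geq 2$, purity forces $B=\varnothing$, so $Y\to X$ is finite \'etale and restricts to $V\to U$, giving essential surjectivity. Proving purity itself is the genuine difficulty, and it is here that regularity of $X$ (rather than mere normality) is indispensable: localizing at a putative branch point of codimension $\geq 2$ reduces one to the case $X=\spec R$ with $R$ regular local of dimension $\geq 2$ and $Y=\spec S$ with $S$ normal, finite over $R$, and unramified at every height-one prime, the task being to deduce that $S$ is unramified over $R$. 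I would run Auslander's homological argument, exploiting the finite global dimension of $R$ together with the reflexivity (Serre $S_2$) of $S$ as an $R$-module and the factoriality of $R$, to conclude that the discriminant ideal---necessarily supported in codimension $\geq 2$---must be the unit ideal. This local analysis is exactly the content of [\cite{SGA} Corollaire 3.3].
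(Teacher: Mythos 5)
The paper does not actually prove this statement: it is quoted as a known theorem with the reference [\cite{SGA} Corollaire 3.3], so there is no internal argument to compare yours against. Your sketch is the standard derivation and is essentially correct, and, importantly, you isolate the genuinely hard input (Zariski--Nagata purity of the branch locus) as exactly the content of the same SGA reference, so your proposal and the paper ultimately rest on the same black box. Two small remarks. First, your hedge about excellence is unnecessary: the normalization of a normal Noetherian domain in a \emph{finite separable} extension of its fraction field is always module-finite (trace-form argument), and separability is automatic here since $K(V)/K(X)$ comes from an \'etale cover; so finiteness of $Y\to X$ needs no hypothesis beyond those stated. Second, full faithfulness is slightly quicker than ``two finite morphisms agreeing on a dense open agree everywhere'': that settles faithfulness, but to see that a $U$-morphism $V_1\to V_2$ extends to $X$ you should either identify $\Hom_X(Y_1,Y_2)$ with the set of connected components of $Y_1\times_X Y_2$ mapping isomorphically to $Y_1$ (these are detected over the dense open $U$ because all schemes involved are normal and integral over each component), or simply deduce full faithfulness formally from essential surjectivity together with the already-established surjectivity of $\pi_1(U)\to\pi_1(X)$. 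Neither point is a gap in substance; the proposal is sound.
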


We also need  the following simple result. Its proof is included
for the reader's convenience. 

\begin{lemma}\label{elementary}
Let $A, B$ be Poisson domains over an algebraically close field $\bf{k}$ of characteristic $p$.
Let $A_1$ be a Poisson $\bf{k}$-subalgebra of $A$. Let $f:A\to B$ be a  $\bf{k}$-algebra isomorphism, such that
$f|_{A_1}$ preserves the Poisson bracket. If $[Frac(A): Frac(A_1)]<p$, then $f$ preserves the Poisson
bracket.
\end{lemma}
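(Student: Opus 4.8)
The plan is to transport the Poisson structure of $B$ back to $A$ along $f$ and then show that the two resulting Poisson brackets on $A$ coincide. Concretely, since $f$ is a $\mathbf{k}$-algebra isomorphism, the formula $\{a,b\}' := f^{-1}(\{f(a),f(b)\})$ defines a second Poisson bracket on $A$ (bilinearity, antisymmetry, Leibniz and Jacobi are all transported through the algebra isomorphism). The hypothesis that $f|_{A_1}$ preserves the bracket says exactly that $\{a,b\}' = \{a,b\}$ for all $a,b \in A_1$, since $\{a,b\}' = f^{-1}(\{f(a),f(b)\}) = f^{-1}(f(\{a,b\})) = \{a,b\}$; and the conclusion we want, that $f$ is a Poisson map, is precisely $\{-,-\}' = \{-,-\}$ on all of $A$. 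Both brackets extend uniquely to the fraction field $L := Frac(A)$ by the quotient rule (a Poisson bracket on a domain is a derivation in each slot, so it extends to localizations), and they agree on the subfield $K := Frac(A_1)$ because they agree on the generating subring $A_1$.

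The crucial input is that the extension $L/K$ is separable. Indeed, its degree $[L:K] < p$ is finite, and the inseparable degree $[L:K]_i$ is a power of $p$ dividing $[L:K]$; since the only power of $p$ that is $< p$ is $1$, we get $[L:K]_i = 1$, i.e.\ $L/K$ is separable. I will combine this with the standard fact that along a finite separable extension every derivation of the base extends uniquely; in particular a derivation of $L$ that vanishes on $K$ vanishes identically, since the zero derivation on $K$ has the zero derivation as its unique extension.

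With these two facts in hand the argument is a two-step biderivation bootstrap. First fix $a \in A_1$ and consider $\delta_a := \{a,-\} - \{a,-\}'$; this is a derivation of $L$ (a difference of two derivations) which vanishes on $A_1$, hence on $K$, hence by separability vanishes on all of $L$. Thus $\{a,c\} = \{a,c\}'$ for every $a \in A_1$ and every $c \in A$. Second, fix an arbitrary $c \in A$ and consider $\epsilon_c(a) := \{a,c\} - \{a,c\}'$, which is again a derivation of $L$ in the variable $a$; by the first step it vanishes on $A_1$, hence on $K$, hence on $L$. This yields $\{a,c\} = \{a,c\}'$ for all $a,c \in A$, which is the desired equality of brackets.

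The only genuinely delicate point, and the one I would check with care, is the separability step together with the uniqueness of extensions of derivations: everything hinges on the hypothesis $[L:K] < p$ ruling out inseparability, and on correctly applying the extension fact twice, once in each Poisson slot. By contrast, the verification that $\{-,-\}'$ is a genuine Poisson bracket and that both brackets extend to $L$ is routine.
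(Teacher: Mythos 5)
Your proof is correct and follows essentially the same route as the paper's: both reduce to the fact that a derivation of $Frac(A)$ is determined by its restriction to $Frac(A_1)$ because the degree bound $[Frac(A):Frac(A_1)]<p$ forces separability (the paper proves this by differentiating the minimal polynomial explicitly, which is exactly the content of the separability fact you cite), and then apply this in each Poisson slot. Your write-up merely makes explicit the transported bracket $\{-,-\}'$ and the two-step bootstrap that the paper leaves implicit in ``this implies our assertion.''
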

\begin{proof}
We may assume that $A, B, A_1$ are fields.
Let $x\in A$. 
Let $d<p$ be the degree of $x$ over $A_1$.
Hence $\sum_{n=0}^d a_nx^n=0$ with $a_d\neq 0$ for some $a_i\in A_1.$
Let $D: A\to A$ be a derivation. Then 
$$D(x)(\sum na_nx^{n-1})=-\sum(D(a_n)x^n).$$
Thus $D$ is determined by $D|_{A_1}$. This implies our assertion. 
\end{proof}

\begin{proof}[Proof of Proposition \ref{key}]
Put $Z=X/W\cong Y/W'.$ Denote by $p_1:X\to Z$ and $p_2:Y\to Z$ the corresponding
quotient maps.
Let $U$ (respectively  $U'$) be the symplectic locus of $X$ (resp. $Y$.)
Let $U_1$ (respectively $U'_1$) be the locus of points in $U$ (resp. $U'$) on which $\Gamma$ (resp. $W$)
acts freely.
Now it is immediate that $U\setminus U_1$ (respectively  $U'\setminus U'_1$) has at least codimension
2 in $U$ (resp. $U'$).  Put $V=p_1(U_1)\cap p_2(U_2).$ 
Then $Z\setminus V$ has codimension at least 2 in $Z.$
Thus $p_1^{-1}(V)$ (resp. $p_2^{-1}(V)$) has complement in $U$ of codimension
at least 2 (resp. complement in $U'$). Hence by
Lemma \ref{etale} $p_1^{-1}(V)$ and  $p_1^{-1}(V)$ do not admit any nontrivial
$p'$-degree \'etale coverings. On the other hand, $p_1:p_1^{-1}(V)\to V$ and $p_2:p_2^{-1}(V)\to V$
are $W$ (respectively $W'$)-Galois covering. Hence
 there exists an
isomorphism $f:p_1^{-1}(V)\to p_2^{-1}(V)$
interchanging actions of $W$ and $W': f_{*}(W)=W'.$
By Lemma \ref{elementary} $f$ preserves the Poisson bracket.
Now since $X\setminus p_1^{-1}(V)$ has codimension at least 2 and $X$ is a normal variety, we conclude that
$\mathcal{O}(p_1^{-1}(V))=\mathcal{O}(X).$ Similarly, $\mathcal{O}(p_2^{-1}(V))=\mathcal{O}(Y).$
Thus, we get the desired compatible isomorphisms $X\cong Y, W\cong W'.$
\end{proof}

Now we can easily prove Theorem \ref{diff}.

\begin{proof}[Proof of Theorem \ref{diff}]
Put $A=D(X)^W, B= D(Y)^{W'}.$ We may chose large enough finitely generated
subring $S\subset\mathbb{C}$, over which $A, B$ are defined, 
such that $A$ and $B$ are derived equivalent over $S.$
Now the standard argument about derived invariance of the Hochschild cohomology
yields that $Z(A_p)\cong Z(B_p)$ as $S_p$-Poisson algebras (see [\cite{T} Lemma 4]) .
On the other hand, using Corollary \ref{action}
for a base change $S\to\bold{k}$
to an algebraically closed field $\bold{k}$ of characteristic $p\gg 0$, we have
$Z(A_{\bold{k}})=Z(D(X_{\bold{k}}))^W$ and $Z(B_{\bold{k}})=Z(D(X_{\bold{k}}))^W .$
Therefore, we have an isomorphism of Poisson $\bold{k}$-algebras
$$Z(D(X_{\bold{k}}))^W\cong Z(D(Y_{\bold{k}})))^{W'}.$$ 
But since $Z(D(X_{\bold{k}}))$ (respectively $Z(D(Y_{\bold{k}}))$) is isomorphic to (the Frobenius twist) of the cotangent 
$T^*(X_{\bold{k}})$ (resp. $T^*(Y_{\bold{k}})$),  we have an isomorphism of Poisson $\bold{k}$-varieties
$$T^*(X_{\bold{k}})/W\cong T^*(Y_{\bold{k}})/W'.$$
Since by the assumption $T^*(X)$ and $T^*(Y)$ are simply connected,  it follows that $T^*(X_{\bold{k}})$ (similarly $T^*(Y_{\bold{k}})$)
  admits no nontrivial $p'$-\'etale covering (see [\cite{T2}, Lemma 5].)
Now Proposition \ref{key} applied to $T^*(X_{\bold{k}})$ and $T^*(Y_{\bold{k}})$ yields the desired isomorphism $W\cong W'$.

\end{proof}

In order to prove Theorem \ref{M} we  need few more lemmas.
In what follows $\mathfrak{g}$ is a fixed complex semisimple Lie algebra
with an integral model $\mathfrak{g}_{\mathbb{Z}}.$ As usual, given a ring $S$
we put $\mathfrak{g}_{S}=\mathfrak{g}_{\mathbb{Z}}\otimes S.$
 Throughout we are using notations from Section 3.

\begin{lemma}\label{faithful}

Let $S\subset\mathbb{C}$ be a finitely generated ring and let
 $\Gamma\subset \Aut(U(\mathfrak{g}_S))$ be a finite subgroup of $S$-automorphisms. 
Suppose that $S$ contains all $|\Gamma|$-th roots of unity.
Then there exists $0\neq f\in S$, such that 
 for any base change to an algebraically closed field $S[f^{-1}]\to\bold{k}$
of characteristic $p\gg 0,$
 if $\chi:Z_0(\mathfrak{g}_{\bf{k}})\to \bf{k}$ is a $\Gamma$-invariant character, then  the action of $\Gamma$ on 
 $Z_{\chi}=Z(U(\mathfrak{g}_{\bold{k}}))/\ker{\chi})$ is faithful.

\end{lemma}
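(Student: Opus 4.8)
The plan is to prove faithfulness of the $\Gamma$-action on $Z_\chi$ for $p \gg 0$ by reducing to a faithfulness statement over $\mathbb{C}$ and then spreading it out over the base ring $S$. The key structural input is Theorem \ref{Veldkamp}: the center $Z(U(\mathfrak{g}_{\bold{k}}))$ is built from the $p$-center $Z_p(\mathfrak{g}_{\bold{k}})$ and the Harish-Chandra part $Z_0(\mathfrak{g}_{\bold{k}})$, with $\spec Z_\chi \cong \mu^{-1}(\chi')$ a fiber of the moment map $\mu:\mathfrak{g}_{\bold{k}}^* \to \mathfrak{g}_{\bold{k}}^*//G_{\bold{k}}$. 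An automorphism $\gamma \in \Gamma$ acts on $Z(U(\mathfrak{g}_{\bold{k}}))$ as a Poisson automorphism, preserving $Z_0$ (and fixing $\chi$ by the invariance hypothesis), hence descending to a Poisson automorphism of each fiber $Z_\chi$. The goal is to show this descended action is nontrivial whenever $\gamma \neq \Id$.

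First I would record that $\Gamma$ acts faithfully on $Z(U(\mathfrak{g}_{\bold{k}}))$ itself. This follows from Corollary \ref{action} applied to $A = U(\mathfrak{g}_{\bold{k}})$: the PBW filtration gives $\Gr(A) = \Sym(\mathfrak{g}_{\bold{k}})$, a commutative domain, and $A$ is finite over its center by Theorem \ref{Veldkamp}, so $\Gamma$ acts faithfully on $Z(A)$ once $\bold{k}$ contains the requisite roots of unity (guaranteed by the hypothesis on $S$). The remaining issue is to pass from faithfulness on the full center to faithfulness on a single fiber $Z_\chi$. The natural approach is to argue that for $p \gg 0$ the finitely many nontrivial elements of $\Gamma$ each act nontrivially on $Z_\chi$; equivalently, that no nontrivial $\gamma$ acts as the identity on $\mu^{-1}(\chi')$.

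The cleanest route is via spreading out from characteristic zero. Over $\mathbb{C}$, the group $\Gamma$ lifts (after enlarging $S$) to a finite subgroup of $\Aut(U(\mathfrak{g}_{\mathbb{C}}))$, and since $U(\mathfrak{g}_{\mathbb{C}})$ has trivial center's Poisson structure interacting with the adjoint action, one shows each nontrivial $\gamma$ acts nontrivially on $\mathfrak{g}_{\mathbb{C}}^*$ and hence on a generic coadjoint orbit. I would fix a $\Gamma$-invariant point of $\mathfrak{g}_{\mathbb{C}}^*//G_{\mathbb{C}}$ whose fiber is a single regular semisimple orbit on which the $\Gamma$-action is demonstrably faithful, then observe this is an open (Zariski-generic) condition. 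Inverting a single nonzero $f \in S$ that clears denominators and avoids the finitely many bad loci simultaneously for all $\gamma \in \Gamma$, I can guarantee the condition persists after every base change $S[f^{-1}] \to \bold{k}$ with $p \gg 0$, since the fiber $\mu^{-1}(\chi')$ over $\bold{k}$ is the reduction of the characteristic-zero orbit and faithfulness of a finite-order automorphism is detected on the coordinate ring by a non-vanishing that spreads out.

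The main obstacle I anticipate is controlling the interaction between the choice of $\chi$ and the reduction: a priori $\chi$ is an \emph{arbitrary} $\Gamma$-invariant character over $\bold{k}$, not necessarily the reduction of a fixed characteristic-zero character, so I cannot simply transport a single good fiber. To handle this I would instead argue uniformly over the whole family: since $\gamma$ acts faithfully on the total center $Z(U(\mathfrak{g}_{\bold{k}}))$, which is a finite extension of $Z_0(\mathfrak{g}_{\bold{k}}) \otimes Z_p(\mathfrak{g}_{\bold{k}})$, a nontrivial $\gamma$ fixing every fiber $Z_\chi$ would force $\gamma$ to act trivially modulo the ideal generated by $\ker\chi$ for all $\chi$ in a dense set, and hence trivially on $Z_p(\mathfrak{g}_{\bold{k}})$ and on $\mathfrak{g}_{\bold{k}}$ itself, contradicting faithfulness. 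Making this descent argument precise — ensuring the single $f$ works for all invariant $\chi$ at once and that the codimension bounds from the symplectic-locus discussion of Section 3 are respected — is where the real care is needed, but the finiteness of $\Gamma$ keeps the number of conditions bounded and the standard generic-flatness and spreading-out machinery applies.
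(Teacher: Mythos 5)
There is a genuine gap. The lemma demands faithfulness of $\Gamma$ on $Z_{\chi}$ for \emph{every} $\Gamma$-invariant character $\chi$, and this is exactly what your argument does not deliver. Your first route (transporting a single good fiber from characteristic zero) you correctly abandon, since $\chi$ is an arbitrary invariant character of $Z_0(\mathfrak{g}_{\bf{k}})$, not the reduction of a fixed one. But your fallback --- deducing a contradiction from a nontrivial $\gamma$ acting trivially on $Z_{\chi}$ ``for all $\chi$ in a dense set'' and hence trivially on $Z_p(\mathfrak{g}_{\bf{k}})$ --- only rules out $\gamma$ killing a dense family of fibers simultaneously. That yields faithfulness on \emph{generic} fibers at best, which is strictly weaker than the statement; the application in Lemma \ref{codim} needs faithfulness at a particular $\chi$ lying under the fixed locus $X^{\sigma}$, which is precisely a non-generic point. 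Worse, the $\Gamma$-invariant characters need not be dense in $\spec Z_0(\mathfrak{g}_{\bf{k}})$ at all (a nontrivial $\gamma$ will generally move most characters), so the quantifier ``for all $\chi$ in a dense set'' is not even available: the hypothesis of invariance restricts $\chi$ to a possibly thin closed subset.

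The paper closes this gap by working with the noncommutative quotient rather than only with centers: it sets $B=U(\mathfrak{g}_{\bf{k}})/\ker(\chi)U(\mathfrak{g}_{\bf{k}})$, proves (by the argument of [T, Proposition 1]) that $\Gamma$ still acts faithfully on $B$ for $p\gg 0$, and then applies Corollary \ref{action} to $B$ itself --- the Skolem--Noether/$X$-outer mechanism transfers faithfulness from $B$ to $Z(B)=Z_{\chi}$. The essential idea you are missing is that faithfulness on the fiber must be established at the level of the algebra $U_{\chi}$ (where a filtration/degree argument controls automorphisms congruent to the identity modulo $\ker(\chi)U(\mathfrak{g}_{\bf{k}})$), after which Corollary \ref{action} does the descent to the center of the fiber; no amount of information about the action on the total center $Z(U(\mathfrak{g}_{\bf{k}}))$ alone will pin down the action on one fixed fiber.
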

\begin{proof}
There exists a nonzero element $f\in S$, such that for any base change $S[f^{-1}]\to\bf{k}$ the
induces action of $\Gamma$ on $U(\mf{g}_{\bf{k}})$ is faithful.
So $\Gamma\subset \Aut(U(\mathfrak{g}_{\bold{k}})).$
Put $B=U(\mathfrak{g}_{\bf{k}})/\ker(\chi)U(\mathfrak{g}_{\bf{k}}).$ 
Then a proof identical to  [\cite{T}, Proposition 1] shows that the restriction of  the action of $\Gamma$
on  $B$ is faithful.
 So $\Gamma\subset \Aut(B).$
Now by Lemma \ref{action} $\Gamma$ acts faithfully on $Z_{\chi}.$

\end{proof}

The next result plays a crucial role in proving Theorem \ref{M}.

\begin{lemma}\label{connected}

Let $\bf{k}$ be an algebraically closed field of characteristic $p\gg 0.$
Let $X=\spec Z(U(\mathfrak{g}_{\bf{k}}))$ be the Zassenhaus variety of $\mathfrak{g}_{\bf{k}}$.
Let $U$ be the smooth locus of $X.$ Then $U$ does not admit any nontrivial \'etale $p'$-degree covering.
\end{lemma}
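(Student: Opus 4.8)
The plan is to analyze the structure of the Zassenhaus variety $X = \spec Z(U(\mathfrak{g}_{\bf{k}}))$ using the explicit description provided by Veldkamp's Theorem \ref{Veldkamp}, which realizes $Z(U(\mathfrak{g}_{\bf{k}}))$ as a module over the $p$-center $Z_p(\mathfrak{g}_{\bf{k}}) \cong \Sym(\mathfrak{g}_{\bf{k}})$, and then to leverage simple-connectivity-type facts about the fibers of the moment map together with the purity statement in Theorem \ref{etale}. The key geometric object is the finite map $X \to \spec Z_p(\mathfrak{g}_{\bf{k}}) \cong \mathfrak{g}_{\bf{k}}^*{}^{(1)}$ (the Frobenius twist of $\mathfrak{g}^*_{\bf{k}}$), which is the Artin–Schreier-type covering associated with the inclusion $Z_0(\mathfrak{g}_{\bf{k}}) \hookrightarrow Z(U(\mathfrak{g}_{\bf{k}}))$ of degree $p^l$.

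First I would use the decomposition from Theorem \ref{Veldkamp}: the multiplication map gives $Z(U(\mathfrak{g}_{\bf{k}})) \cong Z_p(\mathfrak{g}_{\bf{k}}) \otimes_{Z_p(\mathfrak{g}_{\bf{k}})^{G_{\bf{k}}}} Z_0(\mathfrak{g}_{\bf{k}})$, so that $X$ fits into a Cartesian-type diagram over the quotient $\spec Z_p(\mathfrak{g}_{\bf{k}})^{G_{\bf{k}}} \cong \mathfrak{g}^*_{\bf{k}}{}^{(1)}//G_{\bf{k}}$. Fixing a character and passing to the fiber $\spec Z_\chi$ identifies it (as noted in the excerpt) with the fiber $\mu^{-1}(\chi')$ of the adjoint quotient map $\mu:\mathfrak{g}^*_{\bf{k}} \to \mathfrak{g}^*_{\bf{k}}//G_{\bf{k}}$, whose symplectic locus has complement of codimension at least $2$. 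The strategy is to reduce the absence of $p'$-degree étale coverings of the smooth locus $U$ of $X$ to the analogous statement for these fibers, or directly to a simple-connectivity statement for $\mathfrak{g}^*_{\bf{k}}$ itself via the degree-$p^l$ cover $X \to \mathfrak{g}^*_{\bf{k}}{}^{(1)}$.

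The core argument I would run is as follows. By Theorem \ref{etale}, since the singular locus $X \setminus U$ has codimension $\geq 2$, one has $\pi_1(U) \cong \pi_1(X)$, so it suffices to show $X$ admits no nontrivial $p'$-degree connected étale cover. Now $X \to \mathfrak{g}^*_{\bf{k}}{}^{(1)}$ is a finite flat morphism whose fibers are described by the $p$-center; the generic fiber is governed by the Galois-type action, and away from a small locus this map is understood. The plan is to show that any $p'$-degree étale cover of $U$ would, after pulling back along the smooth locus of $\mathfrak{g}^*_{\bf{k}}$ and using that $\mathfrak{g}^*_{\bf{k}} \cong \mathfrak{g}_{\bf{k}}$ is an affine space (hence $\pi_1 = 1$, so admits no nontrivial étale covers of any degree), force triviality. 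The point is that the map to affine space is ``close to'' the Frobenius on the $p$-center, which is a purely inseparable (hence degree a power of $p$) phenomenon and therefore cannot contribute any $p'$-part to the fundamental group.

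The main obstacle I anticipate is controlling the interaction between the purely inseparable Frobenius direction (the $p$-center contributes a $p$-power degree covering of $\mathfrak{g}^*_{\bf{k}}{}^{(1)}$, which is invisible to $p'$-degree étale fundamental groups) and the Harish-Chandra direction $Z_0$, and in particular verifying that the smooth locus of $X$ maps appropriately onto the smooth locus of the target so that the codimension-$\geq 2$ hypotheses of Theorem \ref{etale} genuinely apply on both sides. Concretely, one must check that over the regular semisimple locus the map $X \to \mathfrak{g}^*_{\bf{k}}{}^{(1)}$ is étale or at worst purely inseparable, so that a $p'$-degree cover of $U$ descends to (or pulls back from) a $p'$-degree cover of a simply connected open subset of affine space, which must then be trivial. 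Making this descent/pullback precise — ensuring no $p'$-degree monodromy can hide in the singular or non-regular locus — is the delicate step, and I would expect to invoke the codimension bound on the complement of the symplectic locus of each fiber $\mu^{-1}(\chi')$ to close the gap.
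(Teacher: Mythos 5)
There is a genuine gap, and it occurs at the very first step of your core argument. You invoke Theorem \ref{etale} to conclude that $\pi_1(U)\cong\pi_1(X)$ because the singular locus $X\setminus U$ has codimension $\geq 2$, and you then reduce to showing that $X$ itself has no nontrivial $p'$-degree cover. But Theorem \ref{etale} (purity of the branch locus) requires the ambient scheme $X$ to be \emph{regular}, and the Zassenhaus variety is only normal: it genuinely has singularities, which is the entire reason the lemma is stated for the smooth locus $U$ rather than for $X$. For a normal but singular variety, removing the singular locus can enlarge the fundamental group --- the standard example is a quotient singularity such as $\mathbb{A}^2/\{\pm 1\}$, whose smooth locus has a nontrivial double cover even though the variety itself is simply connected. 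So the reduction from $U$ to $X$ is invalid, and this is precisely the difficulty the lemma is designed to overcome. The remainder of your plan --- transporting $p'$-covers along the finite degree-$p^l$ map $X\to\mathfrak{g}^{*}_{\bf{k}}{}^{(1)}$ coming from the $p$-center --- is also unresolved as written: that map is finite but not étale, and étale covers neither descend along nor pull back from such a map in any straightforward way; you flag this as ``the delicate step'' but it is in fact the whole content.

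The paper's proof uses a device you do not have: an explicit morphism $\phi:\mathfrak{g}^*_{\bf{k}}\to X$ going in the \emph{opposite} direction (from [Ta, Remark 2.4]), which restricts to an isomorphism $\phi^{-1}(U_{rss})\cong U_{rss}$ over the regular semisimple locus and whose domain of definition over $U$ has complement of codimension $\geq 2$ in affine space. Given a $p'$-degree cover $\pi:Y\to U$, one pulls it back along $\phi$ to an open subset of $\mathfrak{g}^*_{\bf{k}}$ with small complement; \emph{there} the ambient space is regular (indeed an affine space), so Theorem \ref{etale} applies legitimately and the pullback is trivial. Transporting back through the isomorphism over $U_{rss}$ shows $\pi$ is trivial over $U_{rss}$, and the surjectivity $\pi_1(U_{rss})\twoheadrightarrow\pi_1(U)$ (the half of Theorem \ref{etale} that needs only regularity of $U$) finishes the argument. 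Your instinct that the $p$-center direction is a pure-$p$ phenomenon invisible to $p'$-covers is reasonable, but without a section-like map from affine space into $X$ it does not close the argument.
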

\begin{proof}
As explicitly constructed in [\cite{Ta}, Remark 2.4], there exists a morphism of varieties $\phi: \mathfrak{g}^*_{\bold{k}}\to X,$
such that it induces  an isomorphism $\phi^{-1}(U_{rss})\cong U_{rss}$ on an open subset of regular semisimple
elements $U_{rss}\subset U.$ Put  $O=\phi^{-1}(U_{rss}).$ Thus $\phi|_{O}:O\cong U_{rss}.$
Let $W=\phi^{-1}(U).$ Hence the complement of $W$ in $\mathfrak{g}^*_{\bold{k}}$ has codimension at least 2. 
In particular, using Lemma \ref{etale} $W$ admits no nontrivial $p'$-degree \'etale covering.
Let $\pi:Y\to U$ be a $p'$-degree \'etale covering. Let $\pi':Y'\to W$ be its pull-back
via $\phi.$ Therefore, $\pi'$ must be a trivial covering, hence so is its restriction
on $O.$ Thus the restriction of $\pi$ on $U_{rss}$ is trivial, implying the triviality
of the covering $\pi$ (again by Lemma \ref{etale}.)

\end{proof}

\begin{lemma}\label{codim}
Let $S\subset\mathbb{C}$ be a finitely generated ring and let
 $W\subset \Aut(U(\mathfrak{g}_S))$ be a finite subgroup of automorphisms. 
Then there exists $0\neq f\in S$, such that 
 for any base change to an algebraically closed field $S[f^{-1}]\to\bold{k}$
of characteristic $p\gg 0,$  the locus of points in $\spec Z(U(\mathfrak{g}_{\bold{k}}))$ with a nontrivial stabilizer in $W$
 has at least codimension $\geq 2.$ 

\end{lemma}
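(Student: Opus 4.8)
The plan is to prove that for each nontrivial $g\in W$ the fixed-point locus $X^g$ on the Zassenhaus variety $X=\spec Z(U(\mathfrak{g}_{\bold{k}}))$ has codimension at least $2$; since the locus of points with nontrivial stabilizer is the finite union $\bigcup_{1\neq g\in W}X^g$, this suffices. First I would fix $f\in S$ exactly as in the first sentence of the proof of Lemma \ref{faithful}, so that for every base change $S[f^{-1}]\to\bold{k}$ to an algebraically closed field of characteristic $p\gg 0$ the induced map $W\to\Aut(U(\mathfrak{g}_{\bold{k}}))$ is injective, and I would also take $p>|W|$, so that $\bold{k}$ contains a primitive $|W|$-th root of unity and $|W|$ is invertible. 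Since $U(\mathfrak{g}_{\bold{k}})$ is a $\bold{k}$-domain carrying the PBW filtration with $\Gr(U(\mathfrak{g}_{\bold{k}}))=\Sym(\mathfrak{g}_{\bold{k}})$ a commutative domain, and is finite over its center, Corollary \ref{action} applies and shows that $W$ acts faithfully on $Z(U(\mathfrak{g}_{\bold{k}}))$. In particular $X^g$ is a proper closed subvariety of $X$ for every nontrivial $g\in W$.

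Next I would invoke the Poisson geometry of $X$. As the center of a domain, $Z(U(\mathfrak{g}_{\bold{k}}))$ is a domain, so $X$ is irreducible. Let $U\subset X$ be its symplectic locus, i.e.\ the open set on which the reduction-modulo-$p$ Poisson bracket is nondegenerate; by the description of $X$ recalled in Section 3 (via the fibers $\mu^{-1}(\chi')$ of Theorem \ref{Veldkamp}), the complement $X\setminus U$ has codimension at least $2$, and $U$ is a dense irreducible open subset carrying a symplectic form $\omega$. Each $g\in W$ acts on $X$ by a Poisson automorphism and therefore preserves $U$ together with $\omega$.

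The core of the argument is a local computation at a fixed point. Fix a nontrivial $g\in W$ of order $d\mid |W|$, so $p\nmid d$, and let $x\in X^g\cap U$. The differential $dg_x$ lies in the symplectic group of $(T_xU,\omega_x)$ and, having order prime to $p$ over the algebraically closed field $\bold{k}$, is semisimple with eigenvalues among the $d$-th roots of unity. Decomposing $T_xU$ into eigenspaces and using that a symplectic map forces $\omega_x$ to pair the $\lambda$-eigenspace nontrivially only with the $\lambda^{-1}$-eigenspace, one sees that $\omega_x$ restricts nondegenerately to the fixed subspace $\ker(dg_x-1)=T_x(X^g)$; hence $X^g\cap U$ is smooth of even codimension in $U$ at $x$. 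Because $g$ acts nontrivially, $X^g\cap U$ is a proper closed subset of the irreducible variety $U$, so each of its components has positive even codimension, i.e.\ at least $2$. Combining this with $X\setminus U$ having codimension at least $2$ yields that $X^g$ has codimension at least $2$ in $X$, and taking the union over the nontrivial $g\in W$ completes the proof.

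The main obstacle I expect is making the symplectic even-codimension step rigorous in characteristic $p$: one must confirm that the symplectic locus is genuinely $g$-stable (which follows since $g$ preserves the Poisson bivector, hence its rank) and that the eigenspace decomposition of $dg_x$, together with the smoothness of $X^g\cap U$ and the identification of its tangent space with $\ker(dg_x-1)$, are all available, which is precisely where the hypothesis $p\nmid d$ is used via the invertibility of $|g|$. A secondary point is checking that a single $f$ and the bound $p\gg 0$ simultaneously guarantee faithfulness, Veldkamp's description, and the codimension estimate for $X\setminus U$; since all of these hold over a common $S[f^{-1}]$ for all large $p$, no further shrinking of $\spec S$ is required.
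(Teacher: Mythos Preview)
Your overall strategy---show that $X^g$ has even, hence $\geq 2$, codimension via the symplectic geometry of the Poisson structure---is precisely what the paper does. But there is a genuine gap in the step where you introduce the ``symplectic locus'' $U\subset X$. The Poisson bracket on $Z(U(\mathfrak{g}_{\bold{k}}))$ is degenerate at \emph{every} point: the Harish-Chandra subalgebra $Z_0(\mathfrak{g}_{\bold{k}})=\bold{k}[\bar f_1,\dots,\bar f_l]$ lies in the Poisson center, so the Poisson bivector has a kernel of dimension at least $l=\rank(\mathfrak{g})>0$ at every smooth point of $X$. Thus the symplectic locus of $X$ is empty, there is no open $U$ carrying a symplectic form $\omega$, and your tangent-space computation cannot be run on $X$ directly. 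Section~3 only asserts that each \emph{fiber} $X_\chi=\spec Z_\chi$ is symplectic outside codimension $2$, not $X$ itself.

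The paper fixes this by passing to such a fiber. Assuming for contradiction that some nontrivial $\sigma\in W$ has $X^\sigma$ of codimension $1$, one takes $\chi$ in the image of $X^\sigma$ under $X\to\spec Z_0(\mathfrak{g}_{\bold{k}})$; then $\chi$ is $\langle\sigma\rangle$-invariant, so $\Gamma=\langle\sigma\rangle$ acts on $X_\chi$, and Lemma~\ref{faithful} guarantees this action is still faithful. Now $X_\chi$ really is symplectic off a codimension-$2$ subset, your even-codimension argument (which is correct as written) applies to $X_\chi^\Gamma$, and one gets a contradiction with $X^\sigma\cap X_\chi$ having codimension $1$ in $X_\chi$. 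The two missing ingredients in your proposal are exactly this restriction to a $\Gamma$-stable fiber and the appeal to Lemma~\ref{faithful} to ensure faithfulness persists there.
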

\begin{proof}
Put $X= \spec Z(U(\mathfrak{g}_{\bold{k}})).$
Assume that there exists a non-identity element $\sigma\in W$, such that $X^g$ has codimension 1 in $X.$
 Put $\Gamma=\langle \sigma\rangle.$ 
Let $\chi\in (\spec Z_0(\mathfrak{g}_{\bold{k}}))^{\Gamma}$ be in the image of $X^{\Gamma}=X^{\sigma}$ under the map $X\to\spec Z_0(\mathfrak{g}_{\bold{k}}).$
Then $\Gamma$ acts on the the quotient $U_{\chi}=U(\mathfrak{g})/\ker(\chi)U(\mathfrak{g}).$ 
Put $Z(U_{\chi})=Z_{\chi}, X_{\chi}=\spec Z(U_{\chi})$ and $Y=X_{\chi}^{\Gamma}.$
We may (and will) view $X_{\chi}$ as a $\Gamma$-stable subvariety of $X$ 
 By Lemma \ref{faithful}, $\Gamma$ acts faithfully on $X_{\chi}.$ So, $Y=X^{\Gamma}\cap X_{\chi}$ has codimension 1 in $X_{\chi}.$ 
But this is a contradiction, since $X_{\chi}$ is a symplectic variety
outside a codimension 2 subset and $\Gamma$ acts faithfully on it preserving the symplectic structure.

\end{proof}

\begin{proof}[Proof of Theorem \ref{main}]
Just as in the proof of Theorem \ref{diff}, we may pick large enough finitely
generated ring $S\subset \mathbb{C}$ over which $W, W'$ are defined, such that
$S$-algebras $U(\mathfrak{g})^W$ and $U(\mathfrak{g'})^{W'}$ are derived equivalent.
Therefore,  after a base change $S\to\bf{k}$ to an algebraically closed field of characteristic $p\gg 0$,
we get a Poisson $\bf{k}$-algebra isomorphism (similarly to the Proof of Theorem \ref{diff})
$$Z(U(\mathfrak{g}_{\bold{k}}))^W\cong Z(U(\mathfrak{g}_{\bold{k}}))^W.$$
Put $X=\spec Z(U(\mathfrak{g}_{\bold{k}})), Y=\spec Z(U(\mathfrak{g'}_{\bold{k}})).$
Then by Lemma  \ref{codim} the locus of points in $X$ (respectively $Y$) with a non-trivial stabilizer in $W$ (resp. $W'$)
 has codimension at least $2.$ Since the smooth loci of $X$ and $Y$ do not admit any nonytivial $p'$-degree \'etale coverings by Lemma \ref{connected},
we may adapt the proof of Proposition \ref{key} to this setting.  Hence we get an isomorphism of  Poisson $\bold{k}$-algebras

$$f:Z(U(\mathfrak{g}_{\bold{k}}))\to Z(U(\mathfrak{g}_{\bold{k}}')),$$
that  interchanges the  actions of $W$ and $W'.$
Now let $\mathfrak{m}$ be a maximal Poisson ideal in $Z(U(\mathfrak{g}_{\bold{k}}))$, and put $\mathfrak{m}'=f(\mathfrak{m}).$
Then we get an isomorphism of Lie algebras $\mathfrak{m}/\mathfrak{m}^2\cong \mathfrak{m'}/\mathfrak{m'}^2.$
It follows easily from the description of $Z(U(\mathfrak{g}_{\bold{k}})$) that $\mathfrak{m}/\mathfrak{m}^2$ (respectively $\mathfrak{g'}_{\bold{k}})$) is isomorphic to a direct sum of  $\mathfrak{g}_{\bold{k}}$ (resp. $\mathfrak{g'}_{\bold{k}}$) with an abelian
Lie algebra (see [\cite{T} Lemma 3].)
This easily yields an isomorphism $\mathfrak{g}_{\bold{k}}\cong \mathfrak{g'}_{\bold{k}}.$
So $\mathfrak{g}\cong \mathfrak{g'}.$

\end{proof}

\begin{acknowledgement} 
I am grateful to R.Tange for several helpful comments. 
I would also like to thank the anonymous referee for many helpful suggestions
that led to improvement of the paper.

\end{acknowledgement}


\end{document}